\def\omathop#1#2#3{\let\temp=#1\def\letter{#2}
  \ifcat#3_ \let\next\@@olim\else\let\next\@olim\fi\next#3}
\def\@olim{\letter\text{-}\!\temp}
\def\@@olim_#1{\mathchoice{
   \setbox0=\hbox{$\displaystyle\letter\text{-}\!\temp\!\text{-}\letter$}
   \setbox2=\hbox{$\displaystyle\temp$}
   \setbox4=\hbox{$\scriptstyle#1$}
   \dimen@=\wd4 \advance\dimen@ by -\wd2 \divide\dimen@ by2
   \def\next{\letter\text{-}\!\temp_{\hbox to 0pt{\hss$\scriptstyle#1$\hss}}
     \hskip\dimen@}
   \ifdim\wd2>\wd4 \def\next{\@olim_{#1}}\fi
   \ifdim\wd4>\wd0 \def\next{\mathop{\llap{$\letter$-}\!\temp}\limits_{#1}}\fi
   \next}
   {\@olim_{#1}}{\@olim_{#1}}{\@olim_{#1}}}
\theoremstyle{plain}
\newtheorem{thm}{Theorem}[section]
\newtheorem{rem}[thm]{Remark}
\theoremstyle{definition}
\newtheorem{definition}[thm]{Definition}
\numberwithin{equation}{section}
\begin{document}
\title{Stinespring type theorem for a finite family of maps on Hilbert $%
C^{\star }$-modules}
\author{M.~Pliev}
\address{South Mathematical Institute of the Russian Academy of Sciences\\
str. Markusa 22, Vladikavkaz, 362027 Russia}
\thanks{The author was supported by the Russian Academy of Science, grant %
\No 12-01-00623}
\keywords{$C^{\star}$-algebras, Hilbert $C^{\star}$-modules, Stinespring
theorem, completely positive maps, Hilbert spaces}
\subjclass[2000]{46L08.}

\begin{abstract}
The aim of this article is to extend the results of Asadi~M.B, B.V.R. Bhat,
G. Ramesh, K. Sumesh about completely positive maps on Hilbert $C^{\star }$%
-modules. We prove a Stinespring type theorem for a finite family of
completely positive maps on Hilbert $C^{\star }$-modules. We also show that
any two minimal Stinespring representations are unitarily equivalent.
\end{abstract}

\maketitle




\section{Introduction}

Stinespring representation theorem is a fundamental theorem in the theory of
completely positive maps. The study of completely positive maps is motivated
by applications of the theory of completely positive maps to quantum
information theory, where operator valued completely positive maps on $%
C^{\star}$-algebras are used as a mathematical model for quantum operations,
and quantum probability. A completely positive map $\varphi:A\rightarrow B$
of $C^{\star}$-algebras is a linear map with the property that $%
[\varphi(a_{ij})]_{i,j=1}^{n}$ is a positive element in the $C^{\star}$%
-algebra $M_{n}(B)$ of all $n\times n$ matrices with entries in $B$ for all
positive matrices $[(a_{ij})]_{i,j=1}^{n}$ in $M_{n}(A)$, $n\in\mathbb{N}$.
Stinespring \cite{S} shown that a completely positive map $%
\varphi:A\rightarrow L(H)$ is of the form $\varphi(\cdot)=S^{\star}\pi(%
\cdot)S$, where $\pi$ is a $\star$-representation of $A$ on a Hilbert space $%
K$ and $S$ is a bounded linear operator from $H$ to $K$. Theorem about the
structure of $n\times n$ matrices whose entries are linear positive maps
from $C^{\star}$-algebra $A$ to $L(H)$, known as completely $n$-positive
linear maps, were obtained by Heo \cite{H}. Hilbert $C^{\star}$-modules are
generalizations of Hilbert spaces and $C^{\star}$-algebras. In \cite{A}
Asadi had considered a version of the Stinespring theorem for completely
positive map on Hilbert $C^{\star}$-modules. Later Bhat, Ramesh and Sumesh
in \cite{B} had removed some technical conditions. Skiede in \cite{S} had
considered whole construction in a framework of the $C^{\star}$%
-correspondences. Finally Joita in \cite{J-1,J-2} had proved covariant
version of the Stinespring theorem and Radon-Nikodym theorem. In this paper,
we shall prove a version of the Stinespring theorem for a finite families of
the completely positive maps on Hilbert $C^{\star}$-modules.

\section{Preliminaries}

The goal of this section is to introduce some basic definitions and facts.
General information on $C^{\star}$-algebras, Hilbert $C^{\star}$-modules and
completely positive maps the reader can find in the books \cite{La,Ma,M,P,R}.

We denote Hilbert spaces by $H_{1},H_{2},K_{1},K_{2}$ etc and the
corresponding inner product and the induced norm by $\langle \cdot ,\cdot
\rangle $ and $||\cdot ||$ respectively. Throughout we assume that the inner
product is conjugate linear in the first variable and linear in the second
variable. The space of bounded linear operators from $H_{1}$ to $H_{2}$ is
denoted by $L(H_{1},H_{2})$ and $L(H_{1}):=L(H_{1},H_{1})$. We denote $%
C^{\star }$-algebras by $A,B$ etc. The $C^{\star }$-algebra of all $n\times
n $ matrices with entries from $A$ is denoted by $M_{n}(A)$.

\smallskip A Hilbert $C^{\star}$-module $V$ over $C^{\star}$-algebra $A$ is
a linear space which is also a right $A$-module, equipped with an $A$-valued
inner product $\langle\cdot,\cdot\rangle_{A}$ that is $V$ is $\mathbb{C}$%
-linear and $A$-linear in the second variable and conjugate linear in the
first variable such that $V$ is complete with the norm $||x||=||\langle
x,x\rangle_{A}||^{\frac{1}{2}}$. $V$ is full if the closed bilateral $\star$%
-sided ideal $\langle V,V\rangle_{A}$ of $A$ generated by $\{\langle
x,y\rangle_{A}:\,x,y\in V\}$ coincides with $A$. Remind the reader that $%
L(H_{1},H_{2})$ is a Hilbert $L(H_{1})$-module for any two Hilbert spaces $%
H_{1},H_{2}$, with the following operations: 
\begin{gather}
\text{module map}\,\,: (T,S)\mapsto TS : L(H_{1},H_{2})\times
L(H_{1})\rightarrow L(H_{1},H_{2}); \\
\text{inner product}\,\,\langle T, S\rangle\mapsto T^{\star}S :
L(H_{1},H_{2})\times L(H_{1},H_{2})\rightarrow L(H_{1}).
\end{gather}
A representation of $V$ on the Hilbert spaces $H_{1}$ and $H_{2}$ is a map $%
\Psi:V\rightarrow L(H_{1},H_{2})$ with the property that there is a $\star$%
-representation $\pi$ of $A$ on the Hilbert space $H_{1}$ such that 
\begin{equation*}
\langle \Psi(x),\Psi(y)\rangle=\pi(\langle x,y\rangle)
\end{equation*}
for all $x,y\in V$. If $V$ is full, then the $\star$-representation $\pi$
associated to $\Psi$ is unique. A representation $\Psi:V\rightarrow
L(H_{1},H_{2})$ of $V$ is nondegenerate if $[\Psi(V)(H_{1})]=H_{2}$ and $%
[\Psi(V)^{\star}(H_{2})]=H_{1}$ (here, $[Y]$ denotes the closed subspace of
a Hilbert space $Z$ generated by subset $Y\subset Z$). A map $%
\Phi:V\rightarrow L(H_{1},H_{2})$ is called \textit{completely positive} on $%
V$ if there is a linear completely positive map $\varphi:A\rightarrow
L(H_{1})$ such that 
\begin{equation*}
\langle \Phi(x),\Phi(y)\rangle=\varphi(\langle x,y\rangle)
\end{equation*}
for all $x,y\in V$.

Linear map $\varphi:A\rightarrow B $ is said to be positive if $%
\varphi(a^{\star}a)\geq 0$, for all $a\in A$. An $n\times n$ matrix $%
[\varphi_{ij}]_{i,j=1}^{n}$ of linear maps from $A$ to $B$ can be regarded
as a linear map $[\varphi]:M_{n}(A)\rightarrow M_{n}(A)$ defined by 
\begin{equation*}
[\varphi]([a_{ij}]_{i,j=1}^{n})=[\varphi_{ij}(a_{ij})]_{i,j=1}^{n}
\end{equation*}
We say that $[\varphi]$ is a completely $n$-positive linear map from $A$ to $%
B$ if $[\varphi]$ is a completely positive linear map from $M_{n}(A)$ to $%
M_{n}(B)$. If $[\varphi_{ij}]_{i,j=1}^{n}$ is a completely $n$-positive
linear map from $A$ to $B$, then $\varphi_{ii}$ is a completely positive
linear map from $A$ to $B$ for each $i\in\{1,\dots,n\}$.

\textbf{Acknowledgment.}~I am very grateful to professor Maria Joita for her
useful comments and remarks.

\section{Main result}

\label{sec2}

In this section we strengthen Bhat, Ramesh and Sumesh theorem and discuss
the minimality of the representation.

Let $V$ be a Hilbert $C^{\star}$-module over $A$ and let $H_{1},H_{2}$ be
Hilbert spaces. Let $\Phi_{i},\,i\in\{1,\dots,n\}$ be a maps $%
\Phi_{i}:V\rightarrow L(H_{1},H_{2})$.

\begin{definition}
\label{def:nar1} A $n$-tuple of maps $\Phi=(\Phi_{1},\dots,\Phi_{n})$ is
called \textit{completely positive}, if there is a completely $n$-positive
map $[\varphi]$ from $A$ to $L(H_{1})$ such that 
\begin{equation*}
[\langle \Phi_{i}(x),\Phi_{j}(y)\rangle]_{i,j=1}^{n}=[\varphi_{ij}\langle
x,y\rangle]_{i,j=1}^{n}
\end{equation*}
for every $x,y\in V$.
\end{definition}

\begin{rem}
It is obvious that every map $\Phi _{i},i\in \{1,\dots ,n\}$ is completely
positive.
\end{rem}

\begin{thm}
Let $A$ be a unital $C^{\star }$-algebra, $V$ be a Hilbert $A$-module, $%
[\varphi _{ij}]_{i,j=1}^{n}:A\rightarrow L(H_{1})$ be a $n$-completely
positive map and let $\Phi =(\Phi _{1},\dots ,\Phi _{n})$, $\Phi
_{i}:V\rightarrow L(H_{1},H_{2}),\,i\in \{1,\dots ,n\}$ be a $\left[ \varphi %
\right] $-completely positive n-tuple. Then there exists a data $(\pi
,S_{1},\dots ,S_{n},K_{1})$, $(\Psi ,W_{1},\dots ,W_{n},K_{2})$, where

\smallskip (1)~$K_{1}$ and $K_{2}$ are Hilbert spaces;

\smallskip (2)~$\Psi :V\rightarrow L(K_{1},K_{2})$ is a representation of $V$
on the Hilbert spaces $K_{1}$ and $K_{2}$, $\pi :A\rightarrow L(K_{1})$ is a
unital $\star $-homomorphism associated with $\Psi $, $S_{i}:H_{1}%
\rightarrow K_{1}$ are isometric linear operators, $W_{i}:H_{2}\rightarrow
K_{2}$ are coisometric linear operators for every $i\in \{1,\dots ,n\}$,
such that 
\begin{equation*}
\varphi _{ij}(a)=S_{i}^{\star }\pi _{A}(a)S_{j}\,\,\text{for all}\,\,\,a\in
A;\,i,j\in \{1,\dots ,n\}\,\,
\end{equation*}%
and%
\begin{equation*}
\Phi _{i}(x)=W_{i}^{\star }\Psi (x)S_{i}\text{ for all }x\in V\text{ and
every }i\in \{1,\dots ,n\}.
\end{equation*}
\end{thm}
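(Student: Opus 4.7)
The plan is to carry out the construction in two stages. Stage one applies a Stinespring/Heo-type construction to the completely $n$-positive map $[\varphi_{ij}]$ to produce $(\pi, S_1,\dots,S_n, K_1)$. Stage two performs an interior-tensor-product (KSGNS-style) construction, built over the first stage, to produce $(\Psi, W_1,\dots,W_n, K_2)$.

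For stage one, form the algebraic tensor product $A\otimes \mathbb{C}^{n}\otimes H_{1}$ and equip it with the sesquilinear form
$$\langle a\otimes e_{i}\otimes h,\; b\otimes e_{j}\otimes k\rangle := \langle h,\; \varphi_{ij}(a^{\star}b)\,k\rangle_{H_{1}}.$$
Complete $n$-positivity of $[\varphi_{ij}]$ guarantees positive semidefiniteness; quotienting by the null space and completing yields $K_{1}$. Left multiplication by $A$ on the first tensor factor descends to a unital $\star$-homomorphism $\pi: A\to L(K_{1})$, and setting $S_{i}h := [1_{A}\otimes e_{i}\otimes h]$ produces operators satisfying $S_{i}^{\star}\pi(a)S_{j} = \varphi_{ij}(a)$, with each $S_{i}$ isometric provided the natural normalization $\varphi_{ii}(1_{A}) = I_{H_{1}}$ (implicit in the conclusion) holds.

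For stage two, form the algebraic tensor product $V\otimes K_{1}$ and equip it with
$$\langle x\otimes\xi,\; y\otimes\eta\rangle := \langle \xi,\; \pi(\langle x,y\rangle_{A})\,\eta\rangle_{K_{1}},$$
which is positive semidefinite because $\pi$ is a $\star$-representation and the $A$-valued inner product on $V$ is positive. Quotient by the null space and complete to obtain $K_{2}$. The map $\Psi(x): K_{1}\to K_{2}$ defined by $\Psi(x)\xi := [x\otimes\xi]$ is bounded (with $\|\Psi(x)\xi\|\le \|x\|\|\xi\|$) and satisfies $\langle \Psi(x),\Psi(y)\rangle = \pi(\langle x,y\rangle_{A})$ by construction, so it is a representation of $V$ on $K_{1},K_{2}$ associated with $\pi$.

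Finally, for each $i\in\{1,\dots,n\}$, define $W_{i}^{\star}: K_{2}\to H_{2}$ on the dense subspace spanned by vectors $\Psi(x)S_{i}h$ via $W_{i}^{\star}(\Psi(x)S_{i}h) := \Phi_{i}(x)h$. The key computation validating this is
$$\Big\|\sum_{k}\Psi(x_{k})S_{i}h_{k}\Big\|^{2}_{K_{2}} = \sum_{k,l}\langle h_{k},\,\varphi_{ii}(\langle x_{k},x_{l}\rangle_{A})\,h_{l}\rangle = \Big\|\sum_{k}\Phi_{i}(x_{k})h_{k}\Big\|^{2}_{H_{2}},$$
where the first equality uses $S_{i}^{\star}\pi(a)S_{i} = \varphi_{ii}(a)$ and the second uses that $\Phi$ is $[\varphi]$-completely positive. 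This shows $W_{i}^{\star}$ extends to a partial isometry on $K_{2}$ with initial space $[\Psi(V)S_{i}H_{1}]$ and final space $[\Phi_{i}(V)H_{1}]\subseteq H_{2}$, and the identity $\Phi_{i}(x) = W_{i}^{\star}\Psi(x)S_{i}$ holds by definition. The main obstacle I foresee is reconciling this partial-isometry construction with the global coisometric property claimed for each $W_{i}$: in general $W_{i}W_{i}^{\star}$ equals the projection of $K_{2}$ onto $[\Psi(V)S_{i}H_{1}]$, so a genuine coisometry requires either a nondegeneracy hypothesis on each $\Phi_{i}$ (making $\Phi_{i}(V)H_{1}$ dense in $H_{2}$ and the corresponding subspace dense in $K_{2}$) or a slightly different choice of $K_{2}$, such as restricting to the closed span of $\Psi(V)S_{i}H_{1}$ per index, compatible with the statement's convention.
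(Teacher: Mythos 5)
Your proposal is correct in substance, and its first stage coincides with the paper's: the paper works on $(A\otimes_{\mathrm{alg}}H_{1})^{n}$, which is the same space as your $A\otimes\mathbb{C}^{n}\otimes H_{1}$, with the same sesquilinear form $\langle a\otimes e_{i}\otimes h,\,b\otimes e_{j}\otimes k\rangle=\langle h,\varphi_{ij}(a^{\star}b)k\rangle$, the same quotient--completion, the same left action $\pi$, and the same $S_{i}h=[1\otimes e_{i}\otimes h]$. The second stage genuinely diverges. The paper does not form the interior tensor product $V\otimes_{\pi}K_{1}$; it realizes $K_{2}$ directly inside $H_{2}$ as $[\{\Psi(V)S_{i}(H_{1}):i=1,\dots,n\}]$, defining $\Psi(x)\bigl(\sum_{i,s}\pi(a_{is})S_{i}\xi_{is}\bigr):=\sum_{i,s}\Phi_{i}(xa_{is})\xi_{is}$ and checking well-definedness and boundedness by a computation in which the off-diagonal $\varphi_{ij}$ necessarily enter (vectors with different indices $i$ are mixed); the $W_{i}$ are then simply orthogonal projections of $H_{2}$ onto $K_{2i}=[\Phi_{i}(V)(H_{1})]$, with $W_{i}^{\star}$ the inclusion, so the identity $\Phi_{i}(x)=W_{i}^{\star}\Psi(x)S_{i}$ is immediate. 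Your KSGNS-style route buys automatic positivity of the form on $V\otimes K_{1}$ (complete positivity of the $\star$-homomorphism $\pi$ does all the work) and needs only the diagonal identity $\langle\Phi_{i}(x),\Phi_{i}(y)\rangle=\varphi_{ii}(\langle x,y\rangle)$ to build each $W_{i}^{\star}$, at the cost of an extra quotient-and-completion and of having to transport $\Phi_{i}$ into an abstract $K_{2}$ by a partial isometry rather than reading it off inside $H_{2}$. The two caveats you flag are real, but they are defects of the theorem's statement that the paper shares rather than resolves: the paper never verifies that $S_{i}$ is isometric (this requires $\varphi_{ii}(1)=I_{H_{1}}$, since $S_{i}^{\star}S_{i}=\varphi_{ii}(1)$), and its $W_{i}=P_{K_{2i}}$ satisfies $W_{i}W_{i}^{\star}=I_{K_{2i}}$, i.e.\ it is a coisometry onto $K_{2i}$ and not onto $K_{2}$ unless $K_{2i}=K_{2}$. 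So your hedging on these points matches, rather than falls short of, the published argument.
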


\begin{proof}
At first we prove existence of $\pi$, $K_{1}$ and $S_{1},\dots,S_{n}$. The
more general construction like that is known in the literature (see for
example \cite{J-0}, Theorem 4.1.8), but for sake of a completeness we shall
consider it here. We denote by $(A\otimes_{\text{alg}}H_{1})^{n}$ the direct
sum of $n$ copies of the algebraic tensor product $A\otimes H_{1}$. $%
(A\otimes_{\text{alg}}H_{1})^{n}$ is a vector space with a map $%
\langle\cdot,\cdot\rangle_{0}:(A\otimes_{\text{alg}}H_{1})^{n}\times(A%
\otimes_{\text{alg}}H_{1})^{n}$ defined by formula 
\begin{equation*}
\big\langle\sum_{s=1}^{m}(a_{is}\otimes\xi_{is})_{i=1}^{n},%
\sum_{t=1}^{l}(b_{jt}\otimes\eta_{jt})_{j=1}^{n}\big\rangle_{0}=
\sum_{s,t=1}^{m,l}\sum_{i,j=1}^{n}\langle\xi_{is},\varphi_{ij}(a_{is}^{%
\star}b_{jt})\eta_{jt}\rangle
\end{equation*}
is $\mathbb{C}$-linear in its second variable. It is not difficult to check
that 
\begin{equation*}
\big(\big\langle\sum_{s=1}^{m}(a_{is}\otimes\xi_{is})_{i=1}^{n},%
\sum_{t=1}^{l}(b_{jt}\otimes\eta_{jt})_{j=1}^{n}\big\rangle_{0}\big)%
^{\star}= \big\langle\sum_{t=1}^{l}(b_{jt}\otimes\eta_{jt})_{j=1}^{n},%
\sum_{s=1}^{m}(a_{is}\otimes\xi_{is})_{i=1}^{n}\big\rangle_{0}
\end{equation*}
for all $(a_{is}\otimes\xi_{is})_{i=1}^{n},(b_{jt}\otimes%
\eta_{jt})_{j=1}^{n}\in(A\otimes_{\text{alg}}H_{1})^{n}$ and 
\begin{equation*}
\big\langle\sum_{s=1}^{m}(a_{is}\otimes\xi_{is})_{i=1}^{n},%
\sum_{s=1}^{m}(a_{is}\otimes\xi_{is})_{i=1}^{n}\big\rangle_{0}\geq 0
\end{equation*}
Let $M:=\{\zeta:(A\otimes_{\text{alg}}H_{1})^{n});\langle\zeta,\zeta%
\rangle_{0}=0\}$. $M$ is a subspace of $(A\otimes_{\text{alg}}H_{1})^{n}$.
Then by Cauchy-Schwarz Inequality, $M$ is a subspace of $(A\otimes_{\text{alg%
}}H_{1})^{n})$. Then $(A\otimes_{\text{alg}}H_{1})^{n}/M$ becomes
pre-Hilbert space with inner product defined by 
\begin{equation*}
\langle\zeta_{1}+M,\zeta_{2}+M\rangle:=\langle\zeta_{1},\zeta_{2}\rangle_{0}.
\end{equation*}
The completion of $(A\otimes_{\text{alg}}H_{1})^{n}/M$ with respect to the
topology induced by the inner product is denoted by $K_{1}$. We denote by $%
\xi_{i}$ the element in $(A\otimes H_{1})^{n}$ whose $i^{\text{th}}$
component is $1\otimes\xi$ and all other component are $0$. Now we can
define a map $S_{i}:H_{1}\rightarrow K_{1}$ by 
\begin{equation*}
S_{i}(\xi)=\xi_{i}+M
\end{equation*}
Let denote by $\xi_{a,i}$ the element in $(A\otimes_{\text{alg}}H_{1})^{n}/M$
whose $i^{\text{th}}$ component is $a\otimes\xi$ and all other component are 
$0$. Let $a\in A$. Consider the linear map $\pi(a):(A\otimes_{\text{alg}%
}H_{1})^{n}\rightarrow (A\otimes_{\text{alg}}H_{1})^{n}$ defined by 
\begin{equation*}
\pi(a)(a_{i}\otimes\xi_{i})_{i=1}^{n}=(aa_{i}\otimes\xi_{i})_{i=1}^{n}.
\end{equation*}
Linear map $\pi(a)$ can be extended by linearity and continuity to a linear
map, denoted also by $\pi(a)$, from $K_{1}$ to $K_{1}$. The fact that $%
\pi(a) $ is a representation of $A$ on $L(K_{1})$ it is showed in the same
manner as in the proof of Theorem 3.3.2 from \cite{J-0}. It is not difficult
to check that $\pi(a_{i})S_{i}\xi_{i}=\xi_{i,a}+M$. Therefore the subspace
of $K_{1}$ generated by $\pi(a_{i})S_{i}\xi_{i},\,i\in\{1,\dots,n\},\,%
\xi_{i}\in H_{1}$, $a_{i}\in A$ is exactly $(A\otimes_{\text{alg}%
}H_{1})^{n}/M$.

\smallskip Let $K_{2}:=[\{\Psi (V)S_{i}(H_{1}),i=1,...,n\}]$. Now we can
define $\Psi :V\rightarrow L(K_{1},K_{2})$ as follows: 
\begin{equation*}
\Psi (x)\Big(\sum_{i=1}^{n}\sum_{s=1}^{m}\pi (a_{is})S_{i}\xi _{is}\Big)%
:=\sum_{i=1}^{n}\sum_{s=1}^{m}\Phi _{i}(xa_{is})\xi _{is},
\end{equation*}%
where $x\in V$, $a_{is}\in A$, $\xi _{is}\in H_{1}$, $1\leq i\leq n,1\leq
s\leq m$, $m\in \mathbb{N}$. We claim that $\Psi (x)$ is well defined 
\begin{equation*}
\Big|\Big|\Psi (x)\Big(\sum_{i=1}^{n}\sum_{s=1}^{m}\pi (a_{is})S_{i}\xi _{is}%
\Big)\Big|\Big|^{2}=\Big|\Big|\sum_{i=1}^{n}\sum_{s=1}^{m}\Phi
_{i}(xa_{is})\xi _{is}\Big|\Big|^{2}=
\end{equation*}%
\begin{equation*}
=\Big\langle\sum_{s=1}^{m}\sum_{i=1}^{n}\Phi _{i}(xa_{is})\xi
_{is},\sum_{r=1}^{m}\sum_{j=1}^{n}\Phi _{j}(xa_{jr})\xi _{jr}\Big\rangle=
\end{equation*}%
\begin{equation*}
=\sum_{s,r=1}^{m}\sum_{i,j=1}^{n}\langle \xi _{is},\Phi _{i}(xa_{is})^{\star
}\Phi _{j}(xa_{jr})\xi _{jr}\rangle =
\end{equation*}%
\begin{equation*}
=\sum_{s,r=1}^{m}\sum_{i,j=1}^{n}\langle \xi _{is},\varphi _{ij}(\langle
xa_{is},xa_{jr}\rangle )\xi _{jr}\rangle =
\end{equation*}%
\begin{equation*}
=\sum_{s,r=1}^{m}\sum_{i,j=1}^{n}\langle \xi _{is},S_{i}^{\star }\pi
(a_{is}^{\star }\langle x,x\rangle a_{jr})S_{j}\xi _{jr}\rangle =
\end{equation*}%
\begin{equation*}
=\sum_{s,r=1}^{m}\sum_{i,j=1}^{n}\langle \pi (a_{is})S_{i}(\xi _{is}),\pi
(\langle x,x\rangle )\pi (a_{jr})S_{j}\xi _{jr}\rangle =
\end{equation*}%
\begin{equation*}
=\Big\langle\sum_{s=1}^{m}\sum_{i=1}^{n}\pi (a_{is})S_{i}(\xi _{is}),\pi
(\langle x,x\rangle )\Big(\sum_{r=1}^{m}\sum_{j=1}^{n}\pi (a_{jr})S_{j}\xi
_{jr}\Big)\Big\rangle\leq
\end{equation*}%
\begin{equation*}
\leq \Big|\Big|\pi (\langle x,x\rangle )\Big|\Big|\,\Big|\Big|\big(%
\sum_{r=1}^{m}\sum_{i=1}^{n}\pi (a_{i,r})S_{i}\xi _{i,r})\Big|\Big|^{2}\leq
\end{equation*}%
\begin{equation*}
\leq ||x||^{2}\Big|\Big|\big(\sum_{r=1}^{m}\sum_{i=1}^{n}\pi
(a_{i,r})S_{i}\xi _{i,r})\Big|\Big|^{2}.
\end{equation*}%
Hence $\Psi (x)$ is well defined and bounded. Hence it can be extended to
the whole of $K_{1}$. Now we prove that $\Psi $ is a representation. For
this let $x,y\in V$; $a_{is},b_{jr}\in A$; $\xi _{is},\eta _{jr}\in H_{1}$; $%
1\leq i,j\leq n$; $1\leq s\leq l$, $1\leq r\leq m$; $n,m\in \mathbb{N}$.
Then we have 
\begin{equation*}
\Big\langle\Psi (x)^{\star }\Psi (y)\Big(\sum_{r=1}^{m}\sum_{j=1}^{n}\pi
(b_{j,r})S_{j}\eta _{j,r}\Big),\sum_{s=1}^{l}\sum_{i=1}^{n}\pi
(a_{i,s})S_{i}\xi _{i,s}\Big\rangle=
\end{equation*}%
\begin{equation*}
=\Big\langle\sum_{r=1}^{m}\sum_{j=1}^{n}\Phi _{j}(yb_{jr})\eta
_{jr},\sum_{s=1}^{l}\sum_{i=1}^{n}\Phi _{i}(xa_{is})\xi _{is}\Big\rangle=
\end{equation*}%
\begin{equation*}
=\sum_{s=1}^{l}\sum_{r=1}^{m}\sum_{i,j=1}^{n}\langle \Phi
_{i}(xa_{is})^{\star }\Phi _{j}(yb_{jr})\eta _{jr},\xi _{is}\rangle =
\end{equation*}%
\begin{equation*}
=\sum_{s=1}^{l}\sum_{r=1}^{m}\sum_{i,j=1}^{n}\langle \varphi _{ij}(\langle
xa_{is},yb_{jr}\rangle )\eta _{jr},\xi _{is}\rangle =
\end{equation*}%
\begin{equation*}
=\sum_{s=1}^{l}\sum_{r=1}^{m}\sum_{i,j=1}^{n}\langle S_{i}^{\star }\pi
(a_{is}^{\star }\langle x,y\rangle a_{jr})S_{j}\eta _{jr},\xi _{is}\rangle =
\end{equation*}%
\begin{equation*}
=\Big\langle\pi (\langle x,y\rangle )\Big(\sum_{r=1}^{m}\sum_{j=1}^{n}\pi
(b_{j,r})S_{j}\eta _{j,r}\Big),\sum_{s=1}^{l}\sum_{i=1}^{n}\pi
(a_{i,s})S_{i}\xi _{i,s}\Big\rangle
\end{equation*}%
Thus $\Psi (x)^{\star }\Psi (y)=\pi (\langle x,y\rangle )$ on the dense set
and hence they are equal on $K_{1}$. Note $K_{2}\subset H_{2}$. Denote
subspace $[\Phi _{i}(V)(H_{1})]$ of the $H_{2}$ by $K_{2i}$. Let $%
W_{i}:=P_{K_{2i}},\,i\in \{1,\dots ,n\}$ the orthogonal projection from $%
H_{2}$ to $K_{2i}$. Then $W_{i}^{\star }:K_{2i}\rightarrow H_{2}$ is a
inclusion map. Hence $W_{i}W_{i}^{\star }=I_{K_{2i}}$ for every $i\in
\{1,\dots ,n\}$. Now we give a representation for $\Phi $. For every $x\in V$
and $\xi \in H_{1}$, we have 
\begin{equation*}
\Phi _{i}(x)(\xi )=W_{i}^{\star }\Psi (x)S_{i}(\xi )\,\,\text{for every}%
\,\,i\in \{1,\ldots ,n\}.
\end{equation*}
\end{proof}

\begin{definition}
Let $[\varphi ]$ and $\Phi $ be as an Theorem $3.3$. We say that a data $%
(\pi ,S_{1},\dots ,S_{n},K_{1})$, $(\Psi ,W_{1},\dots ,W_{n},K_{2})$ is a 
\textit{Stinespring representation} of $(\varphi ,\Phi )$ if conditions $%
(1)-(2)$ of Theorem $3.3$ is satisfied. Such a representation is said to be 
\textit{minimal} if

\begin{enumerate}
\item[1)] $K_{1}=[\{\pi (A)S_{i}(H_{1});i=1,...,n\}]$;

\item[2)] $K_{2}=[\{\Psi (V)S_{i}(H_{1});i=1,...,n\}]$.
\end{enumerate}
\end{definition}

\begin{thm}
Let $[\varphi ]$ and $\Phi $ be as an Theorem $3.3$. Assume that $(\pi
,S_{1},\dots ,S_{n},K_{1})$, $(\Psi ,W_{1},\dots ,W_{n},K_{2})$ and $(\pi
^{\prime },S_{1}^{\prime },\dots ,S_{n}^{\prime }\,K_{1}^{\prime })$, $(\Psi
^{\prime },W_{1}^{\prime },\dots ,W_{n}^{\prime },K_{2})$ are minimal
Stinespring representations. Then there exists unitary operators $%
U_{1}:K_{1}\rightarrow K_{1}^{\prime }$, $U_{2}:K_{2}\rightarrow
K_{2}^{\prime }$ such that

\begin{enumerate}
\item[(1)] $U_{1}S_{i}=S_{i}^{\prime },\,\forall i\in \{1,\dots ,n\}$; $%
U_{1}\pi (a)=\pi ^{\prime }(a)U_{1}$, $\forall a\in A$.

\item[(2)] $U_{2}W_{i}=W_{i}^{\prime };\,\forall i\in \{1,\dots ,n\}$; $%
U_{2}\Psi (x)=\Psi ^{\prime }(x)U_{1}$; $\forall x\in V$.
\end{enumerate}

That is a following diagram commutes, for all $a\in A$, $x\in V$, $i\in
\{1,\dots ,n\}$ 
\begin{equation*}
\begin{CD} H_{1}@>S_{i}>> K_{1}@>\pi(a)>> K_{1}@>\Psi(x)>>K_{2}@<W_{i}<<
H_{2}\\ @VV\text{Id}V @VVU_{1}V @VVU_{1}V @VVU_{2}V @VV\text{Id}V \\
H_{1}@>S'_{i}>> K'_{1}@>\pi'(a)>>K'_{1}@>\Psi'(x)>>K'_{2}@<W'_{i}<< H_{2}
\end{CD}
\end{equation*}
\end{thm}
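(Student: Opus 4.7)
The plan is to construct $U_1$ and $U_2$ directly on the dense subspaces supplied by the two minimality conditions and then verify each intertwining relation by short inner-product computations, using the observation that the relevant norms on these subspaces depend only on $[\varphi_{ij}]$ and not on the particular representation.

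I would first define, on $D_1 = \mathrm{span}\{\pi(a)S_i\xi : a\in A,\ \xi\in H_1,\ i=1,\dots,n\}$,
\[
U_1\Bigl(\sum_{i,s}\pi(a_{is})S_i\xi_{is}\Bigr) := \sum_{i,s}\pi'(a_{is})S_i'\xi_{is}.
\]
Expanding $\|\cdot\|^2$ and using $S_i^{\star}\pi(a)S_j=\varphi_{ij}(a)$ collapses the squared norm to $\sum_{i,j,s,t}\langle\xi_{is},\varphi_{ij}(a_{is}^{\star}a_{jt})\xi_{jt}\rangle$, a quantity that is identical for the primed representation. Hence $U_1$ is well defined and isometric on $D_1$; minimality (1) for both sides extends it to a unitary $K_1\to K_1'$. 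Taking $a_{is}=1_A$ and using unitality of $\pi$ yields $U_1 S_i = S_i'$, and $U_1\pi(a)\pi(b)S_j\xi = \pi'(ab)S_j'\xi = \pi'(a)U_1\pi(b)S_j\xi$ gives $U_1\pi(a)=\pi'(a)U_1$.

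I would then repeat the same scheme on $K_2$, setting $U_2\bigl(\sum\Psi(x_{is})S_i\xi_{is}\bigr) := \sum\Psi'(x_{is})S_i'\xi_{is}$ on $D_2 = \mathrm{span}\{\Psi(x)S_i\xi\}$. Combining $\Psi(y)^{\star}\Psi(x)=\pi(\langle y,x\rangle)$ with $S_i^{\star}\pi(a)S_j=\varphi_{ij}(a)$ reduces the squared norm to $\sum\langle\xi_{is},\varphi_{ij}(\langle x_{is},x_{jt}\rangle)\xi_{jt}\rangle$, again representation-independent; minimality (2) upgrades $U_2$ to a unitary $K_2\to K_2'$. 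To get $U_2\Psi(x)=\Psi'(x)U_1$ on $D_1$ I first establish the module identity $\Psi(x)\pi(a)=\Psi(xa)$, which follows from $\Psi(y)^{\star}(\Psi(x)\pi(a)-\Psi(xa))=0$ for every $y\in V$ combined with $[\Psi(V)K_1]=K_2$; then
\[
U_2\Psi(x)\pi(a)S_j\xi = U_2\Psi(xa)S_j\xi = \Psi'(xa)S_j'\xi = \Psi'(x)U_1(\pi(a)S_j\xi).
\]

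The main obstacle is the relation $U_2W_i = W_i'$, since the identity $\Phi_i = W_i^{\star}\Psi(\cdot)S_i$ only constrains $W_i^{\star}$ on the one slice $\Psi(V)S_i(H_1)$ of $K_2$ rather than on the whole span of the slices $\Psi(V)S_j(H_1)$, $j=1,\dots,n$. Testing gives
\[
\langle U_2 W_i\eta,\Psi'(x)S_i'\xi\rangle = \langle W_i\eta,\Psi(x)S_i\xi\rangle = \langle\eta,\Phi_i(x)\xi\rangle = \langle W_i'\eta,\Psi'(x)S_i'\xi\rangle,
\]
so $U_2W_i\eta$ and $W_i'\eta$ agree on the $i$-th slice. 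To close the gap I would invoke the construction $W_i = P_{[\Phi_i(V)(H_1)]}$ together with the coisometry hypothesis to see that both $W_i$ and $W_i'$ have range in the image of the $i$-th slice, so the two vectors are determined by their action there; combined with the already established $U_2\Psi(x)=\Psi'(x)U_1$, this forces $U_2W_i=W_i'$ and makes the displayed diagram commute.
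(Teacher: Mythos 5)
Your construction is essentially the one in the paper: both proofs define $U_{1}$ and $U_{2}$ by the same formulas on the dense subspaces supplied by minimality, verify isometry by collapsing the inner products to expressions involving only $[\varphi_{ij}]$ (via $S_{i}^{\star}\pi(a)S_{j}=\varphi_{ij}(a)$ and $\Psi(x)^{\star}\Psi(y)=\pi(\langle x,y\rangle)$), and obtain $U_{2}\Psi(x)=\Psi^{\prime}(x)U_{1}$ from the module identity $\Psi(xa)=\Psi(x)\pi(a)$ (which the paper simply asserts and you justify from nondegeneracy --- a small improvement). The one point where you diverge is the relation $U_{2}W_{i}=W_{i}^{\prime}$, and the obstacle you flag there is real: the paper's own argument is precisely the ``naive'' one you reject, namely it passes from $(W_{i}^{\star}-W_{i}^{\prime\star}U_{2})\Psi(x)S_{i}\xi=0$ directly to $U_{2}W_{i}=W_{i}^{\prime}$, i.e.\ from agreement of $W_{i}^{\star}$ and $W_{i}^{\prime\star}U_{2}$ on the single slice $[\Psi(V)S_{i}(H_{1})]$ to agreement on all of $K_{2}$, which for $n\geq 2$ is not automatic. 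Be aware, however, that your proposed repair --- invoking $W_{i}=P_{[\Phi_{i}(V)(H_{1})]}$ so that $W_{i}^{\star}$ is supported on the $i$-th slice --- uses the concrete operators built in the existence proof rather than properties guaranteed by the abstract definition of a (minimal) Stinespring representation, so as written it proves uniqueness only for representations of that special form; to get the theorem in the stated generality one must add to the definition a condition tying each $W_{i}$ to its slice. Since the paper does not address this at all, your treatment of this step is, if anything, more honest than the published one.
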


\begin{proof}
Let us prove the existence of the unitary map $U_{1}:H_{1}\rightarrow K_{1}$%
. First define $U_{1}$ on the dense subspace --- $\text{linear span}\{\pi
(A)S_{i}(H_{1});i=1,...,n\}$. 
\begin{equation*}
U_{1}\Big(\sum_{s=1}^{m}\sum_{i=1}^{n}\pi (a_{is})S_{i}(\xi _{is})\Big):=%
\Big(\sum_{s=1}^{m}\sum_{i=1}^{n}\pi ^{\prime }(a_{is})S_{i}^{\prime }(\xi
_{is})\Big)
\end{equation*}%
where $a_{is}\in A$, $\xi _{is}\in H_{1}$, $m\in \mathbb{N}$. It is not
difficult to check that $U_{1}$ is an onto isometry. Denote the extension of 
$U_{1}$ to $K_{1}$ by $U_{1}$ itself. Then $U_{1}$ is unitary and satisfies
the condition in $(1)$. Now define $U_{2}$ on the dense subspace --- $\text{%
linear span}\{\Psi (V)S_{i}(H_{1});i=1,...,n\}$. 
\begin{equation*}
U_{2}\Big(\sum_{i=1}^{n}\sum_{s=1}^{m}\Psi (x_{is})S_{i}\xi _{is}\Big):=\Big(%
\sum_{i=1}^{n}\sum_{s=1}^{m}\Psi ^{\prime }(x_{is})S_{i}^{\prime }\xi _{is}%
\Big),
\end{equation*}%
where $x_{is}\in V$, $\xi _{is}\in H_{1}$, $m\in \mathbb{N}$. Using the fact
that $S_{i},S_{i}^{\prime }$ are isometric operators for every $i\in
\{1,\dots ,n\}$ we have 
\begin{equation*}
U_{2}\Big(\sum_{s=1}^{m}\Psi (x_{is})S_{i}\xi _{ns}\Big)=\sum_{s=1}^{m}\Psi
^{\prime }(x_{is})S_{i}^{\prime }\xi _{ns},
\end{equation*}%
and so $U_{2}(K_{2i})=K_{2i}^{\prime }$, where $K_{2i}$ $=$ $[\Psi
(V)S_{i}(H_{1})]$ and $K_{2i}^{\prime }=[\Psi ^{\prime }(V)S_{i}^{\prime
}(H_{1})]$. We can see that $U_{2}$ is well defined and can be extended to a
unitary map. For this consider 
\begin{equation*}
\Big|\Big|\Big(\sum_{i=1}^{n}\sum_{s=1}^{m}\Psi ^{\prime
}(x_{is})S_{i}^{\prime }\xi _{is}\Big)\Big|\Big|^{2}=\Big\langle%
\sum_{i=1}^{n}\sum_{s=1}^{m}\Psi ^{\prime }(x_{is})S_{i}^{\prime }\xi
_{is},\sum_{j=1}^{n}\sum_{r=1}^{m}\Psi ^{\prime }(x_{jr})S_{j}^{\prime }\xi
_{jr}\Big\rangle=
\end{equation*}%
\begin{equation*}
=\sum_{s,r=1}^{m}\sum_{i,j=1}^{n}\langle \Psi ^{\prime
}(x_{is})S_{i}^{\prime }\xi _{is},\Psi ^{\prime }(x_{jr})S_{j}^{\prime }\xi
_{jr}\rangle =
\end{equation*}%
\begin{equation*}
=\sum_{s,r=1}^{m}\sum_{i,j=1}^{n}\langle \xi _{is},S_{i}^{\prime \star }\pi
^{\prime }(\langle x_{is},x_{jr}\rangle )S_{j}^{\prime }(\xi _{jr})\rangle =
\end{equation*}%
\begin{equation*}
=\sum_{s,r=1}^{m}\sum_{i,j=1}^{n}\langle \xi _{is},\varphi _{ij}(\langle
xa_{is},xa_{jr}\rangle )(\xi _{jr})\rangle =
\end{equation*}%
\begin{equation*}
=\sum_{s,r=1}^{m}\sum_{i,j=1}^{n}\langle \xi _{is},S_{i}^{\star }\pi
(\langle x_{is},x_{jr}\rangle )S_{j}(\xi _{jr})\rangle =
\end{equation*}%
\begin{equation*}
=\sum_{s,r=1}^{m}\sum_{i,j=1}^{n}\langle \Psi (x_{is})S_{i}\xi _{is},\Psi
(x_{jr})S_{j}\xi _{jr}\rangle =
\end{equation*}%
\begin{equation*}
=\Big\langle\sum_{i=1}^{n}\sum_{s=1}^{m}\Psi (x_{is})S_{i}\xi
_{is},\sum_{j=1}^{n}\sum_{r=1}^{m}\Psi (x_{jr})S_{j}\xi _{jr}\Big\rangle=
\end{equation*}%
\begin{equation*}
\Big|\Big|\sum_{i=1}^{n}\sum_{s=1}^{m}\Psi (x_{is})S_{i}\xi _{is}\Big|\Big|%
^{2}.
\end{equation*}%
Hence $U_{2}$ is well defined and isometry, therefore $U_{2}$ can be
extended to whole of $K_{2}$. We call this extension $U_{2}$ itself.
Operator $U_{2}$ is an onto isometry. We have noticed that $(\pi
,S_{1},\dots ,S_{n},K_{1})$, $(\Psi ,W_{1},\dots ,W_{n},K_{2})$ and $(\pi
^{\prime },S_{1}^{\prime },\dots ,S_{n}^{\prime }\,K_{1}^{\prime })$, $(\Psi
^{\prime },W_{1}^{\prime },\dots ,W_{n}^{\prime },K_{2})$ are Stinespring
representations for $([\varphi ],\Phi )$. Hence for every $i\in \{1,\dots
,n\}$ we have 
\begin{equation*}
\Phi _{i}(x)=W_{i}^{\star }\Psi (x)S_{i}=W_{i}^{\prime \star }\Psi ^{\prime
}(x)S_{i}^{\prime }=
\end{equation*}%
\begin{equation*}
=W_{i}^{\prime \star }U_{2}\Psi (x)S_{i}.
\end{equation*}%
Hence 
\begin{equation*}
(W_{i}^{\star }-W_{i}^{\prime \star }U_{2})\Psi (x)S_{i}=0\Rightarrow
\end{equation*}%
\begin{equation*}
(W_{i}^{\star }-W_{i}^{\prime \star }U_{2})\Psi (x)S_{i}(\xi )=0,\,\forall
\,x\in V,\,\xi \in H_{1},\,\,i\in \{1,\dots ,n\}.
\end{equation*}%
Hence $U_{2}W_{i}=W_{i}^{\prime }$ for every $i\in \{1,\dots ,n\}$. Finally
we show that $U_{2}\Psi (x)=\Psi ^{\prime }(x)U_{1}$ on the dense subspace 
\begin{equation*}
\Big\{\sum_{s=1}^{m}\sum_{i=1}^{n}\pi (a_{is})S_{i}(\xi _{is});\,a_{is}\in
A,\,\xi _{is}\in H_{1},\,m\in \mathbb{N}\Big\}.
\end{equation*}%
We must to recall that every representation $\Psi :V\rightarrow
L(K_{1},K_{2})$ has a property $\Psi (xa)=\Psi (x)\pi (a)$ for every $x\in V$
and $a\in A$. Then using the fact that $\Psi $ and $\Psi ^{\prime }$ are
representations associated with $\pi $ and $\pi ^{\prime }$ respectively we
have 
\begin{equation*}
U_{2}\Psi (x)\Big(\sum_{s=1}^{m}\sum_{i=1}^{n}\pi (a_{is})S_{i}(\xi _{is})%
\Big)=U_{2}\Big(\sum_{s=1}^{m}\sum_{i=1}^{n}\Psi (xa_{is})S_{i}\xi _{is}\Big)%
=
\end{equation*}%
\begin{equation*}
\sum_{s=1}^{m}\sum_{i=1}^{n}\Psi ^{\prime }(xa_{is})S_{i}^{\prime }\xi
_{is}=\Psi ^{\prime }(x)\Big(\sum_{s=1}^{m}\sum_{i=1}^{n}\pi ^{\prime
}(a_{is})S_{i}^{\prime }(\xi _{is})\Big)=
\end{equation*}%
\begin{equation*}
=\Psi ^{\prime }(x)U_{1}\Big(\sum_{s=1}^{m}\sum_{i=1}^{n}\pi
(a_{is})S_{i}(\xi _{is})\Big).
\end{equation*}
\end{proof}

\end{document}